\theoremstyle{definition} \newtheorem{theorem}{Theorem}[section]
\theoremstyle{definition} 
\theoremstyle{definition} \newtheorem{lemma}[theorem]{Lemma}
\theoremstyle{definition} 
\theoremstyle{definition} \newtheorem{conjecture}[theorem]{Conjecture}
\theoremstyle{definition} \newtheorem{corollary}[theorem]{Corollary}
\theoremstyle{remark} 
\theoremstyle{remark} \newtheorem{remark}[theorem]{Remark}
\theoremstyle{remark} 
\newcommand{\ssep}{\mid}
\newcommand{\ellphat}{\hat{\ell_p^{S}}}
\newcommand{\ellhat}{\hat{\ell_2^{S}}}
\newcommand{\ellpS}{\ell_p^{S}}
\newcommand{\ellS}{\ell_2^{S}}
\newcommand{\Sq}{\mathrm{Sq}}
\begin{document}

\title{Growth of homotopy groups of spheres via the Goodwillie-EHP Sequence}

\author{Guy Boyde}
\address{Mathematical Sciences, University of Southampton, Southampton SO17 1BJ, United Kingdom}
\email{guy.boyde@gmail.com}

\subjclass[2020]{Primary 55Q40; Secondary 18F50}
\keywords{Homotopy groups of spheres, EHP Sequence, Goodwille Tower}

\begin{abstract}
We bound the volume of the homotopy groups of the 2-local Goodwillie approximations of a sphere in terms of the amount of $2$-torsion in the stable stems, providing a Goodwillie-theoretic refinement of a result of Burklund and Senger. At the $2^k$-excisive approximation, this bound is obtained by `multiplying the stable answer by a polynomial of degree $k$'. The main tool is Behrens' Goodwillie-EHP Long Exact Sequence.
\end{abstract}

\maketitle

\section{Introduction}

Burklund and Senger \cite{BurklundSenger} have recently shown that the volume growth of the (stable and unstable) homotopy groups of spheres is subexponential. Their preprint appeared shortly after the first version of this one, and in light of their work this paper has been greatly revised. The object of this new version is to show that the Goodwillie Tower realises the `unstable part' of their bound as the limit of its Taylor series.

Let $p$ be prime. For an abelian group $A$, let $_{(p)}A$ be the \emph{$p$-torsion subgroup}: the subgroup consisting of elements of order a power of $p$. Let $$\ell_p(A):= \log_p(\mathrm{Card}(_{(p)}A)).$$ Write $\pi^S_*$ for the stable homotopy groups of spheres, and let $$\ellpS(q) := \ell_p(\pi_{q}^S) \textrm{, } \ellphat(q):= \max_{i \leq q} \ellpS(i).$$

The \emph{Goodwillie Tower of the identity}, due to Goodwillie \cite{GoodwillieI, GoodwillieII, GoodwillieIII} consists of the following data:

\begin{itemize}
    \item For each $k \in \mathbb{Z}_{\geq 0}$, a functor $P_k$ from based spaces to based spaces, called the \emph{$k$-excisive approximation to the identity}.
    \item For each $k \in \mathbb{Z}_{\geq 0}$, natural transformations $P_{k+1}(X) \to P_{k}(X)$, and $X \to P_k(X)$.
\end{itemize}

Letting $D_k(X)$ be the homotopy fibre of $P_k(X) \to P_{k-1}(X)$, we obtain a functor $D_k$, which is called the \emph{$k$-th layer} of the Goodwillie Tower. This data can be assembled into a commutative diagram as follows.

\begin{center}
\begin{tikzcd} & \vdots \ar[d] & \\
X \ar[r] \ar[dr] & P_k(X) \ar[d] & \ar[l] D_k(X) \\ 
  & P_{k-1}(X) \ar[d] & \ar[l] D_{k-1}(X) \\
  & \vdots \ar[d] & \\
  & P_1(X). &
\end{tikzcd}
\end{center}

Henceforth, we will work $2$-locally. Arone and Mahowald \cite{AroneMahowald} have shown that in the case of spheres, $D_k(S^n)$ is $2$-locally contractible unless $k$ is a power of $2$. It follows that $P_k(S^n)$ is 2-locally homotopy equivalent to $P_{2^j}(S^n)$, where $2^j$ is the largest power of $2$ which is at most $k$. It is therefore no loss to restrict attention to the layers $P_{2^k}(S^n)$ indexed by powers of $2$.

Our main result is the following bound on the size of the homotopy groups of the 2-local Goodwillie Tower on a sphere.

\begin{theorem} \label{mainThm} Let $n \geq 3$. For all $q \geq 0$ we have
$$\ell_2(\pi_{q+n}(P_{2^k}(S^n))) \leq \ellhat(q) \cdot \sum_{j=0}^k \frac{(q+1)^j}{j! \cdot 2^{\frac{1}{2}j(j-1)}}. $$
\end{theorem}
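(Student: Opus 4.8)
The plan is to induct on $k$ using Behrens' Goodwillie-EHP long exact sequence, which relates the homotopy groups of $P_{2^k}(S^n)$, $P_{2^{k-1}}(S^n)$, and the layers $D_{2^k}(S^n)$. The base case $k=0$ is $P_1(S^n) = QS^n$, the stable sphere, whose homotopy groups in degree $q+n$ are $\pi_q^S$, giving $\ell_2(\pi_{q+n}(P_1(S^n))) = \ell_2^S(q) \leq \hat{\ell_2^S}(q) = \hat{\ell_2^S}(q) \cdot \sum_{j=0}^0 \frac{(q+1)^j}{j! \cdot 2^{j(j-1)/2}}$, so the inequality holds (in fact with $\ell_2^S(q)$ rather than $\hat{\ell_2^S}(q)$ on the left, but we need the monotone version $\hat{\ell_2^S}$ for the inductive step since the EHP sequence mixes different degrees). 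For the inductive step, the long exact sequence expresses $\ell_2(\pi_{q+n}(P_{2^k}(S^n)))$ as bounded by the sum of $\ell_2$ of a homotopy group of $P_{2^{k-1}}(S^n)$ (in some degree $\leq q+n$, to which the inductive hypothesis applies) and $\ell_2$ of a homotopy group of the layer $D_{2^k}(S^n)$.

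Next I would need good control on the layers $D_{2^k}(S^n)$. By Arone-Mahowald, $D_{2^k}(S^n)$ is a Thom spectrum-type object built from the Steinberg summand of classifying spaces of elementary abelian $2$-groups; its homotopy groups are computed (stably) by a spectral sequence whose input involves $\pi_*^S$. The key quantitative fact I expect to establish (or cite from an earlier section) is that $\ell_2(\pi_{q+n}(D_{2^k}(S^n)))$ is bounded by $\hat{\ell_2^S}$ evaluated at a suitable point times a combinatorial factor counting the number of cells, and that this combinatorial factor is exactly $\frac{(q+1)^k}{k! \cdot 2^{k(k-1)/2}}$ — the $k$-th term in the sum. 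The $2^{k(k-1)/2}$ in the denominator should come from the connectivity of the layers growing like $2^k n$ (roughly), forcing the relevant cells to appear in a range that shrinks the count; the $(q+1)^k/k!$ should come from counting monomials/admissible sequences of length $k$ bounded by $q$.

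**The hard part will be** getting the layer estimate $\ell_2(\pi_{q+n}(D_{2^k}(S^n))) \leq \hat{\ell_2^S}(q) \cdot \frac{(q+1)^k}{k! \cdot 2^{k(k-1)/2}}$ with the precise constant, since this requires (i) identifying the homotopy of the layer with an explicit expression in the stable stems via the Arone-Mahowald model, (ii) controlling the contributions of all the cells of the relevant Thom spectrum in the degree range in question, and (iii) tracking the exact combinatorics so that the binomial-type coefficients assemble into the claimed closed form — in particular the $j!$ and the $2^{\frac{1}{2}j(j-1)}$ must emerge naturally rather than as ad hoc over-estimates. Once the layer bound is in hand, the inductive step is formal: the long exact sequence gives $\ell_2(\pi_{q+n}(P_{2^k})) \leq \ell_2(\pi_{?}(P_{2^{k-1}})) + \ell_2(\pi_{q+n}(D_{2^k}))$, and applying the inductive hypothesis to the first term (using monotonicity of $\hat{\ell_2^S}$ to handle the degree shift) plus the layer bound to the second, the two sums $\sum_{j=0}^{k-1}$ and the single term $j=k$ combine to $\sum_{j=0}^k$, completing the induction. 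A secondary subtlety is ensuring the hypothesis $n \geq 3$ is used correctly — it guarantees the relevant simply-connectedness and that the EHP sequence behaves as stated, and possibly that certain low-degree anomalies in the Arone-Mahowald tower are avoided.
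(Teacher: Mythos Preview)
You have misidentified Behrens' Goodwillie--EHP sequence. It is \emph{not} the layer fibration $D_{2^k}(S^n)\to P_{2^k}(S^n)\to P_{2^{k-1}}(S^n)$; rather it is an EHP-type fibration
\[
P_{2^{k+1}}(S^n)\xrightarrow{E}\Omega P_{2^{k+1}}(S^{n+1})\xrightarrow{H}\Omega P_{2^{k}}(S^{2n+1}),
\]
so the three objects it relates are Goodwillie approximations on spheres of \emph{different} dimensions, with the Goodwillie index dropping by one only on the $H$-term. Consequently the paper does not induct on $k$ at all. Instead, after extracting the numerical inequality
\[
\ell_2(\pi_i P_{2^{k+1}}(S^n))\le \ell_2(\pi_{i+1}P_{2^{k+1}}(S^{n+1}))+\ell_2(\pi_{i+2}P_{2^{k}}(S^{2n+1})),
\]
it packages everything into generating functions $\mathcal{L}(k,n;t)$ and runs a \emph{downward induction on $n$} (valid because for fixed $q$ the coefficients vanish for large $n$) to obtain $\mathcal{L}(k,n;t)\le A(k,n;t)\cdot\mathcal{L}^S(t)$, where $A(k,n;t)$ counts completely unadmissible sequences of length $\le k$. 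The polynomial factor $\sum_{j=0}^k\frac{(q+1)^j}{j!\,2^{j(j-1)/2}}$ then comes from bounding $A(k,n;t)$ via Welcher's Poincar\'e series for quotients $M_{j-1}/M_j$ in the Steenrod algebra and a lattice-point count in a simplex.

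Your alternative plan---induct on $k$ using the genuine layer fibration and bound $\ell_2(\pi_{q+n}D_{2^k}(S^n))$ via the Arone--Mahowald cell structure---is not obviously wrong, and the combinatorics of cells in those layer spectra is indeed governed by length-$k$ admissible sequences, so the factor $\frac{(q+1)^k}{k!\,2^{k(k-1)/2}}$ might well emerge. But you have not supplied this estimate, and it is not a lemma sitting elsewhere in the paper; the paper never bounds $\pi_*D_{2^k}$ directly. So as written your proposal rests on a step that is both unproven and orthogonal to the paper's actual argument, and the name you give it (``Behrens' Goodwillie--EHP sequence'') points to a different tool than the one you describe.
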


Informally, one might say that the size of the $2^k$-th Goodwillie filtration is controlled by the product of the stable size and a polynomial of degree $k$.

Mahler \cite{Mahler} notes that for $\alpha < 1$ the function $$F(q) := \sqrt{\frac{\log(\frac{1}{\alpha})}{2 \pi}} \int_{- \infty}^{+ \infty} e^{\frac{1}{2}\log(\alpha)x^2 + q \alpha^{xi - \frac{1}{2}}} dx$$ satisfies $F'(x)=F(\alpha x)$, hence has Taylor Series $$F(q) = \sum_{k=0}^\infty \alpha^{{\frac{1}{2}k(k-1)}}\frac{q^k}{k!}.$$ Taking $\alpha = \frac{1}{2}$ and letting $k \to \infty$, one immediately recovers (using convergence of the Goodwillie Tower as in Lemma \ref{connectivity}) the following result of Burklund and Senger, which also uses Mahler's paper.

\begin{corollary}[{\cite[Appendix A]{BurklundSenger}}] \label{kToInfty} Let $n \geq 3$. For all $q \geq 0$ we have $$
\ell_2(\pi_{q+n}(S^n)) \leq F(q+1) \cdot \ellhat(q).  $$
\end{corollary}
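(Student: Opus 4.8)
The proof of Corollary~\ref{kToInfty} from Theorem~\ref{mainThm} should be a routine limiting argument, so the plan is essentially to check that each ingredient assembles correctly. First I would fix $n \geq 3$ and $q \geq 0$ and recall from Lemma~\ref{connectivity} (the convergence of the Goodwillie Tower on simply connected spaces, in the range dictated by the Blakers--Massey/Arone--Mahowald connectivity estimates) that the natural map $S^n \to P_{2^k}(S^n)$ is an isomorphism on $\pi_{q+n}$ once $k$ is large enough relative to $q$ and $n$; concretely, the connectivity of $X \to P_{2^k}(X)$ grows with $k$, so for $2^k$ sufficiently large the groups $\pi_{q+n}(P_{2^k}(S^n))$ stabilise to $\pi_{q+n}(S^n)$. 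Hence $\ell_2(\pi_{q+n}(S^n)) = \ell_2(\pi_{q+n}(P_{2^k}(S^n)))$ for all $k \gg 0$.

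Next I would apply Theorem~\ref{mainThm} for those large $k$, giving
$$\ell_2(\pi_{q+n}(S^n)) \leq \ellhat(q) \cdot \sum_{j=0}^k \frac{(q+1)^j}{j! \cdot 2^{\frac{1}{2}j(j-1)}}$$
for all sufficiently large $k$. Since the left-hand side does not depend on $k$, and the partial sums on the right are nondecreasing in $k$ and converge, I can pass to the limit $k \to \infty$ to obtain
$$\ell_2(\pi_{q+n}(S^n)) \leq \ellhat(q) \cdot \sum_{j=0}^\infty \frac{(q+1)^j}{j! \cdot 2^{\frac{1}{2}j(j-1)}}.$$
It remains only to identify the infinite sum with $F(q+1)$. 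Taking $\alpha = \tfrac12$ in Mahler's function $F$, the excerpt records that $F$ has Taylor series $F(q) = \sum_{k=0}^\infty \left(\tfrac12\right)^{\frac{1}{2}k(k-1)} \frac{q^k}{k!} = \sum_{k=0}^\infty \frac{q^k}{k! \cdot 2^{\frac{1}{2}k(k-1)}}$, which converges for all $q$ (the $2^{-\binom{k}{2}}$ factor dominates). Substituting $q+1$ for $q$ gives exactly $\sum_{j=0}^\infty \frac{(q+1)^j}{j! \cdot 2^{\frac{1}{2}j(j-1)}} = F(q+1)$, and the corollary follows.

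The only point requiring any care — and the place I would slow down — is the first step: making the convergence statement of Lemma~\ref{connectivity} precise enough to guarantee that $\pi_{q+n}(P_{2^k}(S^n)) \to \pi_{q+n}(S^n)$ is genuinely an isomorphism (not merely a surjection, and with no $\lim^1$ obstruction) for $k$ large. If Lemma~\ref{connectivity} is stated as ``$X \to P_k(X)$ is $c(k)$-connected with $c(k) \to \infty$'' for simply connected $X$, this is immediate once $c(2^k) > q+n$. I do not anticipate any obstacle in the algebraic manipulation: everything else is monotone convergence of a series of nonnegative terms and the bookkeeping substitution $q \mapsto q+1$ in Mahler's formula.
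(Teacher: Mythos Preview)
Your proposal is correct and matches the paper's own argument essentially line for line: the paper simply says to let $k \to \infty$ in Theorem~\ref{mainThm}, invoke Lemma~\ref{connectivity} for convergence of the Goodwillie tower, and identify the resulting series with Mahler's $F(q+1)$ at $\alpha = \tfrac12$. Your extra caution about $\lim^1$ is harmless but unnecessary, since Lemma~\ref{connectivity} already gives a genuine isomorphism $\pi_{q+n}(S^n) \cong \pi_{q+n}(P_{2^k}(S^n))$ once $(2^k+1)(n-1) \geq q+n$.
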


Our results are `relative to' the stable information encoded in $\ellS(q)$. Burklund and Senger show that $\ellS(q) = \exp(O(\log(q)^3))$, and conjecture that this is optimal in the sense that $\ellS(q) = \exp(\Theta(\log(q)^3))$.
Isaksen, Wang and Xu have made the following alternative conjecture.

\begin{conjecture}[\cite{IsaksenWangXu}] \label{IWXConjecture} There exists a nonzero constant $C$ such that $$\lim_{q \to \infty} \frac{\sum_{i=1}^q \ellS(i)}{q^2} = C.$$ \end{conjecture}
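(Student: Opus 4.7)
Conjecture \ref{IWXConjecture} is a deep open problem: it is strictly stronger than the best known upper bound, since $\ellS(q) = \exp(O(\log(q)^3))$ from Burklund--Senger yields only $\sum_{i \leq q} \ellS(i) \leq q \cdot \exp(O(\log(q)^3))$, far above $q^2$. The conjecture therefore asserts that large values of $\ellS$ are sparse enough that the running average is precisely linear. My plan is to separate the problem into three parts: an upper bound on $\limsup$, a matching lower bound on $\liminf$, and existence of the limit.

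For the lower bound $\liminf_q \tfrac{1}{q^2}\sum_{i \leq q} \ellS(i) \geq C'$, I would invoke chromatic homotopy theory as an input: at each chromatic height $n$, $v_n$-periodic families (the image of $J$ and $v_1$-periodic classes at height $1$, $\beta$-families at height $2$, and analogues at higher heights, together with products among them) produce infinite families of $2$-torsion in a positive density of stems. Quantifying each family via known ranks of $v_n$-periodic homotopy and summing over heights should give $\Omega(q^2)$ independent $2$-torsion classes in $\pi^S_{\leq q}$, provided one can verify that chromatic survival passes to survival in the integral stems.

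For a matching upper bound I would start from the mod $2$ Adams spectral sequence, using the standard inequality
$$\ellS(q) \leq \sum_{s \geq 0} \dim_{\mathbb{F}_2} \mathrm{Ext}_{\mathcal{A}}^{s,q+s}(\mathbb{F}_2, \mathbb{F}_2),$$
and bound the right-hand side via the May spectral sequence, whose $E_2$-page is an explicit polynomial algebra on generators $h_{ij}$ of known bidegrees. Asymptotic analysis of the resulting Poincaré series is a concrete but delicate calculation. The worry is that this naive monomial count enormously overestimates the Adams $E_\infty$-page, so to obtain the sharp quadratic constant one would have to exploit systematic cancellation coming from Adams differentials, perhaps by combining with $\mathrm{tmf}$- or $\mathrm{BP}$-based spectral sequence inputs.

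The principal obstacle, and the step for which I see no clear route, is the existence of the limit rather than mere $\limsup$ and $\liminf$ bounds. If the Burklund--Senger conjecture $\ellS(q) = \exp(\Theta(\log(q)^3))$ holds, then individual stems can contribute enormously more than their average share, and Conjecture \ref{IWXConjecture} implicitly asserts that such spikes are equidistributed in a very regular manner. Establishing this regularity would likely require structural input into the stable stems — for instance from motivic or synthetic techniques, or from a conjectural organization of periodic phenomena across all chromatic heights — that lies well beyond the Goodwillie-theoretic methods developed in this paper, and I would expect a complete proof of the conjecture to depend on substantial advances on several independent fronts.
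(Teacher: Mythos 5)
This statement is an open conjecture quoted from Isaksen--Wang--Xu; the paper offers no proof of it (and uses it only as a hypothetical input to motivate Theorem \ref{mainThm}), so there is nothing to compare your argument against. You correctly recognise that it is open and sketch a speculative programme rather than a proof, which is the appropriate response here.
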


The results of this paper are more interesting if the second conjecture is true. Truth of Conjecture \ref{IWXConjecture} would imply that there existed constants $a$ and $b$ such that $\ellhat(q) \leq aq^2 +b$, hence by Theorem \ref{mainThm} that $$\ell_2(\pi_{t+n}(P_{2^k}(S^n))) \leq (aq^2+b) \sum_{j=0}^k \frac{(q+1)^j}{j! \cdot 2^{\frac{1}{2}j(j-1)}}.$$ In particular this would promote Theorem \ref{mainThm} to an absolute bound by a polynomial of degree $k+2$.

For the most part, this paper runs parallel to Appendix A of Burklund and Senger's paper \cite{BurklundSenger}, replacing the algebraic EHP sequence with Behrens' `Goodwillie-EHP' sequence. For this reason, our results are confined to spheres, and to $p=2$, but we expect that similar bounds will hold at odd primes. They prove their results for $p$-rank, and promote them to results about volume at the end, using the fact that spheres have finite homotopy exponent. We are able to work with volume directly, essentially because (Corollary \ref{LemmainVersion}) Hopf invariant is never divisible by more than a single power of 2. Burklund and Senger also state their result using big $O$ notation, stopping short of the completely explicit upper bound of Corollary \ref{kToInfty}. We suspect that this is because it makes no difference in their application, given the bounds they are able to prove on $\ellS(t)$. In particular, the concrete function in Corollary \ref{kToInfty} is not a result of using the calculus.

In the unpublished \cite{AroneKankaanrinta}, Arone and Kankaanrinta give an analogy between the Goodwillie Tower and the Taylor Series of the logarithm function, inverse to an analogy between stable homotopy and $e^{x-1}$. On this view one should think of the Goodwillie Tower as an infinite product, rather than an infinite sum, analogous to the following equation, which is obtained by exponentiating the Taylor Series of $\ln (1+(x-1))$.

$$ e^{x-1} \cdot e^{\frac{(x-1)^2}{2}} \cdot e^{\frac{(x-1)^3}{3}} \dots = x.$$

It may be interesting to consider Theorem \ref{mainThm} from this point of view.

\smallskip

I would like to thank Niall Taggart for his encouragement and helpful comments, and for making me aware of the paper \cite{AroneKankaanrinta} of Arone and Kankaanrinta. I would also like to thank Stephen Theriault and Charlotte Summers for their help and advice, and acknowledge the technical debt to Burklund and Senger. I am grateful for the support of an EPSRC Doctoral Prize.

\section{EHP Sequences and the Goodwillie Tower}

We will need the following lemma on connectivity of $P_{2^k}(S^n)$, which is noted by Johnson \cite{Johnson}. It follows immediately from the proof of Theorem 1.13 in \cite{GoodwillieIII}, using the fact that the identity functor is 1-analytic.

\begin{lemma} \label{connectivity} Let $n \geq 2$. The map $S^n \longrightarrow P_{2^k}(S^n)$ induces an isomorphism on $\pi_i$ for $i \leq (2^k+1)(n-1)$. \qed \end{lemma}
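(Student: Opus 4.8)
The statement to prove is that the natural map $S^n \to P_{2^k}(S^n)$ is $(2^k+1)(n-1)$-connected, i.e., an isomorphism on $\pi_i$ for $i \leq (2^k+1)(n-1)$. The plan is to extract this from the general convergence estimates for the Goodwillie tower of a $1$-analytic functor, applied to the identity. The key input is Goodwillie's Theorem 1.13 of \cite{GoodwillieIII}: if $F$ is $\rho$-analytic and $X$ is $c$-connected with $c \geq \rho$, then the map $F(X) \to P_m(F(X))$ is $\big((m{+}1)(c{-}\rho) + (1{-}\rho)\big)$-connected (the precise bookkeeping constants are what one needs to track). For the identity functor, which is $1$-analytic, we have $\rho = 1$; and $X = S^n$ is $(n{-}1)$-connected, so $c = n-1$. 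Substituting gives connectivity $(m{+}1)(n{-}1{-}1) + (1{-}1) = (m{+}1)(n{-}2)$ in the naive reading — so the first thing I would do is pin down the exact form of Goodwillie's estimate, since the target bound $(2^k{+}1)(n{-}1)$ is slightly stronger than this naive substitution and presumably reflects that for the identity functor one can use $c = n$ in the relevant place (spheres being not merely $(n{-}1)$-connected but having their bottom cell in a controlled range), or an improved analyticity estimate. I would cite the precise statement and constant from \cite{GoodwillieIII} that yields exactly $(2^k{+}1)(n{-}1)$.

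Concretely, I would proceed as follows. First, recall the definition of $\rho$-analyticity and state that the identity functor on based spaces is $1$-analytic (this is in \cite{GoodwillieIII}, and is the standard input). Second, quote the convergence estimate: for the identity, the map $X \to P_m(X)$ is highly connected when $X$ is highly connected, with the connectivity of the map being an explicit affine function of the connectivity of $X$ with slope $m+1$. Third, specialise to $X = S^n$ with $n \geq 2$ and $m = 2^k$, and read off that the connectivity of $S^n \to P_{2^k}(S^n)$ is at least $(2^k{+}1)(n{-}1)$. Since a map that is $N$-connected induces isomorphisms on $\pi_i$ for $i \leq N$ (and a surjection on $\pi_{N+1}$), the claimed isomorphism range follows. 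Since the layers $D_j(S^n)$ are $2$-locally trivial for $j$ not a power of $2$ (Arone–Mahowald), $P_{2^k}(S^n)$ and $P_m(S^n)$ agree $2$-locally for $2^k \leq m < 2^{k+1}$, so there is no conflict between indexing by $m$ and by powers of $2$; I would remark on this only if needed for the statement's phrasing.

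The main obstacle is bookkeeping: getting the constants in Goodwillie's estimate to land exactly on $(2^k{+}1)(n{-}1)$ rather than an off-by-one-times-$(2^k{+}1)$ weaker bound, and making sure the hypothesis $n \geq 2$ (so that $S^n$ is at least $1$-connected, meeting the analyticity threshold $\rho = 1$) is the right one. This is why Johnson's paper \cite{Johnson} is cited as the reference where the estimate is stated in precisely this form; the cleanest writeup simply invokes \cite[proof of Theorem 1.13]{GoodwillieIII} together with $1$-analyticity of the identity, as the excerpt already indicates, and checks the arithmetic. No genuinely new argument is required — the content is entirely in correctly specialising an existing theorem.
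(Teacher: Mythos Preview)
Your proposal is correct and matches the paper's approach exactly: the paper gives no argument beyond the sentence ``It follows immediately from the proof of Theorem~1.13 in \cite{GoodwillieIII}, using the fact that the identity functor is $1$-analytic,'' together with the attribution to Johnson~\cite{Johnson}. Your honest flagging of the constant-tracking issue is appropriate, since the paper itself defers that arithmetic to the cited sources rather than carrying it out.
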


The classical 2-primary EHP sequence is due to James.

\begin{theorem}{\cite{James}} \label{JamesLemma} For $n \geq 1$, there is a fibre sequence
\[
\pushQED{\qed} 
\dots \xrightarrow{P} S^n \xrightarrow{E} \Omega S^{n+1} \xrightarrow{H} \Omega S^{2n+1} \xrightarrow{P} \dots . \qedhere
\popQED
\] \end{theorem}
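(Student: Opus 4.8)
The plan is to prove this in the classical way of James, via the combinatorial model of the loop space; I will indicate the steps and the main obstacle, working $2$-locally throughout. First I would set up the James construction: the free topological monoid $J(S^n)$ on $S^n$ is homotopy equivalent to $\Omega S^{n+1}$, and it carries a filtration $S^n = J_1(S^n) \subseteq J_2(S^n) \subseteq \cdots$ with successive quotients $J_k(S^n)/J_{k-1}(S^n) \simeq (S^n)^{\wedge k} = S^{kn}$. Under this equivalence the bottom inclusion $S^n = J_1(S^n) \hookrightarrow J(S^n) \simeq \Omega S^{n+1}$ is exactly the suspension map $E$; in particular $J_2(S^n)$ is a two-cell complex with cells in dimensions $n$ and $2n$.

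Next I would construct the second James--Hopf invariant $H \colon \Omega S^{n+1} \to \Omega S^{2n+1}$. Collapsing the bottom cell gives $J_2(S^n) \to J_2(S^n)/J_1(S^n) = S^{2n} = J_1(S^{2n})$, and composing with the bottom inclusion $J_1(S^{2n}) \hookrightarrow J(S^{2n}) \simeq \Omega S^{2n+1}$ yields a map $J_2(S^n) \to \Omega S^{2n+1}$ which is trivial on $J_1(S^n)$. Since $J(S^n)$ is the free monoid on $S^n$, this extends multiplicatively to $H \colon J(S^n) \to \Omega S^{2n+1}$; concretely $H$ sends a reduced word $x_1 \cdots x_m$ to the ordered product of its $\binom{m}{2}$ two-letter subwords, read in $S^n \wedge S^n = S^{2n}$. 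By construction $H \circ E$ is nullhomotopic, so $E$ lifts through the homotopy fibre $F$ of $H$; write $e \colon S^n \to F$ for a lift.

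It then remains to show that $e$ is a $2$-local equivalence, equivalently that $F \simeq_{(2)} S^n$; granting this, $S^n \xrightarrow{E} \Omega S^{n+1} \xrightarrow{H} \Omega S^{2n+1}$ is a $2$-local fibre sequence, and extending it to the left as a fibre (Puppe) sequence, with connecting map $P$, gives the asserted long fibre sequence. For the identification of $F$ the plan is a Serre spectral sequence computation: with $\mathbb{F}_2$ coefficients one has $H_*(\Omega S^{n+1}) = \mathbb{F}_2[x_n]$ and $H_*(\Omega S^{2n+1}) = \mathbb{F}_2[y_{2n}]$ (one class in each degree a multiple of $n$, resp. $2n$), and the key lemma is that $H_*$ is surjective, sending $x_n^2$ to $y_{2n}$ and, more generally, the basis class in degree $2kn$ onto the basis class there while killing the classes in odd multiples of $n$. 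Feeding this into the homology Serre spectral sequence of $F \to \Omega S^{n+1} \xrightarrow{H} \Omega S^{2n+1}$ forces $H_*(F;\mathbb{F}_2) \cong \mathbb{F}_2\{1,\iota_n\}$, the $\mathbb{F}_2$-homology of $S^n$; since $e_*$ carries $\iota_n$ to $\iota_n$, the map $e$ is then an $\mathbb{F}_2$-homology isomorphism between simply connected spaces (for $n \geq 2$; the case $n=1$, i.e. $S^1 \to \Omega S^2 \to \Omega S^3$, is elementary), hence a $2$-local equivalence.

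The hard part is precisely the key lemma of the previous paragraph, the identification of the James--Hopf map on $\mathbb{F}_2$-homology: this is where the prime $2$ genuinely enters (the analogue at odd primes holds only for even-dimensional spheres), and I would prove it from the combinatorial formula for $H$ together with the James splitting $\Sigma \Omega S^{n+1} \simeq \bigvee_{k \geq 1} S^{kn+1}$, which makes the "even-weight" nature of $H$ transparent. Everything else is formal manipulation of fibre sequences and a routine spectral sequence argument.
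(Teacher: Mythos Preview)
The paper does not prove this statement at all: it is quoted as a classical result of James, stated with a citation and a \qedhere, and no argument is given. So there is nothing to compare your proof \emph{to} in the paper itself; your proposal is simply a self-contained proof of a result the paper takes as input.

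That said, your outline is essentially the classical James argument and is correct in its broad strokes. One small wrinkle: the phrase ``extends multiplicatively'' for the construction of $H$ is misleading, since $J(S^{2n})$ is not commutative and $H$ is \emph{not} a monoid map; the James--Hopf map is defined directly by the combinatorial formula you then write down (ordered product of two-letter subwords), not by a universal property of the free monoid. This does not affect the argument, since you only use the combinatorial description anyway, but the sentence as written suggests a shortcut that is not actually available. Everything else --- the James model for $\Omega S^{n+1}$, the identification of $E$ with the bottom inclusion, the $\mathbb{F}_2$-homology computation of $H_*$, and the Serre spectral sequence identification of the fibre --- is standard and goes through as you describe.
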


Behrens \cite{Behrens} gives the following refinement, which incorporates the Goodwillie Tower.

\begin{theorem}{\cite[Corollary 2.1.4]{Behrens}} \label{BehrensLemma} For $k \geq 0$ and $n \geq 1$, there is a fibre sequence
\[
\pushQED{\qed} 
\dots \xrightarrow{P} P_{2^{k+1}}(S^n) \xrightarrow{E} \Omega P_{2^{k+1}}(S^{n+1}) \xrightarrow{H} \Omega P_{2^{k}}(S^{2n+1}) \xrightarrow{P} \dots . \qedhere
\popQED
\] \end{theorem}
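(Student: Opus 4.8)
The plan is to lift James' classical fibration (Theorem~\ref{JamesLemma}) through the Goodwillie tower. Since we work $2$-locally, all three spaces $S^n$, $\Omega S^{n+1} = \Omega\Sigma S^n$ and $\Omega S^{2n+1} = \Omega\Sigma(S^n\wedge S^n)$ are built from suspensions, so the Arone--Mahowald analysis \cite{AroneMahowald} of the layers $D_{2^k}(S^m)$ is available; in particular it accounts for the vanishing of all layers of non-$2$-power index, which is why only the stages $P_{2^k}$ appear. I would first construct the maps $E$ and $H$ at each finite stage of the tower, compatibly with the structure maps $P_{2^{k+1}}\to P_{2^k}$, and then check exactness of the resulting sequence one layer at a time.

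For $E$: every reduced homotopy functor $F$ carries a natural assembly map $\Sigma F(Y)\to F(\Sigma Y)$ (the map whose iteration defines the linearisation $P_1F$), equivalently $F(Y)\to\Omega F(\Sigma Y)$, coming from applying $F$ to the pushout $\Sigma Y = CY\cup_Y CY$. Applying this to $F = P_k\mathrm{Id}$ with $Y = \Omega X$, composing with $P_k$ of the evaluation $\Sigma\Omega X\to X$, and passing to adjoints yields a natural transformation $P_k(\Omega X)\to\Omega P_k(X)$. Composing $P_{2^{k+1}}$ of James' map $S^n\to\Omega S^{n+1}$ with this transformation produces $E\colon P_{2^{k+1}}(S^n)\to\Omega P_{2^{k+1}}(S^{n+1})$, naturally in the tower. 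For $H$: one uses the second James--Hopf map $h_2\colon\Omega S^{n+1}\to\Omega S^{2n+1}$, which on Snaith splittings carries the weight-$2j$ summand to the weight-$j$ summand and kills odd-weight summands; tracking this through the comparison between the James weight filtration and the Goodwillie filtration of the identity on a suspension shows that $h_2$ halves Goodwillie filtration, hence restricts to $H\colon\Omega P_{2^{k+1}}(S^{n+1})\to\Omega P_{2^k}(S^{2n+1})$. This halving is exactly the origin of the drop in index from $2^{k+1}$ to $2^k$.

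It then remains to prove the sequence is a fibre sequence. One checks that $H\circ E$ is canonically nullhomotopic, compatibly up the tower, so that $E$ factors through $\mathrm{hofib}(H)$, and then shows $P_{2^{k+1}}(S^n)\to\mathrm{hofib}(H)$ is an equivalence by induction on $k$; the base case is the linear stage, where $P_0\simeq *$ and $P_1(S^m)\simeq QS^m$ make the assertion immediate, and the inductive step compares associated gradeds, i.e.\ it reduces to the `layerwise EHP sequence'
\[
D_{2^{k+1}}(S^n)\longrightarrow\Omega D_{2^{k+1}}(S^{n+1})\longrightarrow\Omega D_{2^k}(S^{2n+1}),
\]
together with the five lemma and convergence of the towers (Lemma~\ref{connectivity}). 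I expect the main obstacle to be this layerwise input: deducing the above fibre sequence of layers from the $\Sigma_m$-equivariant (co)fibre sequences among partition complexes underlying the Arone--Mahowald description, and, more generally, pinning down the Goodwillie filtration of the identity on suspensions tightly enough to read off the filtration-halving behaviour of $h_2$. Finally, note that the restriction to $p=2$ already enters at the first step, since James' fibration is only available $2$-locally; at odd primes one would expect only a metastable analogue.
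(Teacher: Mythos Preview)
The paper does not prove this statement: it is quoted verbatim as \cite[Corollary 2.1.4]{Behrens} and closed with a \qedhere, so there is no proof in the paper to compare your proposal against. Your sketch is a reasonable outline of how Behrens' argument goes (assembly for $E$, James--Hopf $h_2$ for $H$ with the filtration-halving accounting for the drop from $2^{k+1}$ to $2^k$, and a layerwise analysis for exactness), but for the purposes of this paper the result is taken as a black box from the reference and nothing further is required.
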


The following corollary extracts the information that we will use.

\begin{corollary} \label{LemmainVersion} Let $k \geq 0$ and $n \geq 1$. If $i=2n-1$ then $$\ell_2(\pi_{i}(P_{2^{k+1}}(S^{n}))) \leq  \ell_2(\pi_{i+1}(P_{2^{k+1}}(S^{n+1}))) + 1,$$ and otherwise we have $$\ell_2(\pi_{i}(P_{2^{k+1}}(S^{n}))) \leq \ell_2(\pi_{i+1}(P_{2^{k+1}}(S^{n+1}))) + \ell_2(\pi_{i+2}(P_{2^k}(S^{2n+1}))).$$ \end{corollary}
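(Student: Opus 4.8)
The plan is to feed Behrens' fibre sequence (Theorem~\ref{BehrensLemma}) into the long exact sequence of homotopy groups and then track, case by case, where the connecting map can introduce $2$-torsion not already accounted for by the neighbouring groups. Looping the fibre sequence and using $\pi_j(\Omega Y)\cong\pi_{j+1}(Y)$, the relevant portion around degree $i$ reads
\[
\pi_{i+2}(P_{2^{k+1}}(S^{n+1})) \xrightarrow{H_*} \pi_{i+2}(P_{2^{k}}(S^{2n+1})) \xrightarrow{P_*} \pi_{i}(P_{2^{k+1}}(S^{n})) \xrightarrow{E_*} \pi_{i+1}(P_{2^{k+1}}(S^{n+1})) \xrightarrow{H_*} \pi_{i+1}(P_{2^{k}}(S^{2n+1})).
\]
Exactness gives a short exact sequence $0\to\operatorname{im}(P_*)\to\pi_{i}(P_{2^{k+1}}(S^{n}))\to\operatorname{im}(E_*)\to 0$. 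Since the $2$-torsion subgroup functor is only left exact, $\ell_2$ is \emph{subadditive} on short exact sequences and monotone under passage to subgroups; hence
\[
\ell_2\bigl(\pi_{i}(P_{2^{k+1}}(S^{n}))\bigr)\ \le\ \ell_2(\operatorname{im} P_*)+\ell_2(\operatorname{im} E_*)\ \le\ \ell_2(\operatorname{im} P_*)+\ell_2\bigl(\pi_{i+1}(P_{2^{k+1}}(S^{n+1}))\bigr).
\]
This produces the common right-hand term of both claimed inequalities, so everything reduces to bounding $\ell_2(\operatorname{im} P_*)$. Here the two cases genuinely differ: $\operatorname{im}(P_*)$ is a \emph{quotient} of $\pi_{i+2}(P_{2^{k}}(S^{2n+1}))$, and the bound $\ell_2(\mathrm{quotient})\le\ell_2(\mathrm{group})$ is automatic only when that group is finite.

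For $i\ne 2n-1$ the plan is to check that $\pi_{i+2}(P_{2^{k}}(S^{2n+1}))$ is finite. If $i<2n-1$ it vanishes by the connectivity of $P_{2^{k}}(S^{2n+1})$. If $i>2n-1$ then $i+2\ne 2n+1$, and $\pi_j(P_{2^{k}}(S^{2n+1}))$ is finitely generated in each degree and rationally trivial for $j\ne 2n+1$: the layers $D_{2^\ell}(S^{2n+1})$ with $\ell\ge 1$ are rationally contractible (odd sphere), so $P_{2^{k}}(S^{2n+1})\to P_1(S^{2n+1})=QS^{2n+1}$ is a rational equivalence, and $\pi_j(QS^{2n+1})\otimes\mathbb{Q}=\pi^S_{j-2n-1}\otimes\mathbb{Q}=0$ for $j\ne 2n+1$. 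By Serre's finiteness theorem $\pi_{i+2}(P_{2^{k}}(S^{2n+1}))$ is therefore finite, and since $\ell_2$ is additive on short exact sequences of finite groups we get $\ell_2(\operatorname{im} P_*)\le\ell_2(\pi_{i+2}(P_{2^{k}}(S^{2n+1})))$, which is the ``otherwise'' inequality.

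For $i=2n-1$ the relevant group is $\pi_{2n+1}(P_{2^{k}}(S^{2n+1}))$, and Lemma~\ref{connectivity} together with the Hurewicz theorem identifies it with $\pi_{2n+1}(S^{2n+1})\cong\mathbb{Z}$ for every $k\ge0$ (the inequality $2n+1\le(2^k+1)(2n)$ holds for all $n\ge1$, and the case $k=0$ is $\pi^S_0$). By exactness of the displayed sequence $\operatorname{im}(P_*)\cong\mathbb{Z}/\operatorname{im}(H_*)$ with $H_*\colon\pi_{2n+1}(P_{2^{k+1}}(S^{n+1}))\to\pi_{2n+1}(P_{2^{k}}(S^{2n+1}))$. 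In this degree all three spaces in Behrens' sequence agree, via the natural maps from the spheres (isomorphisms in range by Lemma~\ref{connectivity}), with the corresponding spheres, and Behrens' maps refine James' (Theorem~\ref{JamesLemma}), so $H_*$ is the classical Hopf invariant $\pi_{2n+1}(S^{n+1})\to\pi_{2n+1}(S^{2n+1})=\mathbb{Z}$. Its image is $\mathbb{Z}$ (Hopf invariant one, $n+1\in\{2,4,8\}$), or $2\mathbb{Z}$ (it contains the Hopf invariant $\pm2$ of the Whitehead square $[\iota_{n+1},\iota_{n+1}]$ when $n+1$ is even), or $0$ (when $n+1$ is odd, $\pi_{2n+1}(S^{n+1})$ is finite); in every case $\operatorname{im}(P_*)=\mathbb{Z}/\operatorname{im}(H_*)$ is a quotient of $\mathbb{Z}/2$, so $\ell_2(\operatorname{im} P_*)\le1$, giving the $i=2n-1$ inequality, with the ``$+1$'' accounting for the single power of $2$ by which the Hopf invariant can fail to be surjective.

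I expect the crux to be this last step: identifying the Goodwillie--EHP Hopf map $H_*$ with the classical Hopf invariant in the stable range, which should be read off from Behrens' construction of the sequence in \cite{Behrens}, and then invoking the classical fact that the Hopf invariant is surjective up to index at most $2$ --- precisely the phenomenon flagged in the introduction. The finiteness input used in the generic case (rational collapse of the Goodwillie tower on odd spheres together with Serre finiteness) is routine but should be made explicit, since without it a quotient of $\pi_{i+2}(P_{2^{k}}(S^{2n+1}))$ could carry more $2$-torsion than the group itself.
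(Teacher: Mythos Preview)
Your approach is essentially the paper's: feed Behrens' long exact sequence into a subadditivity bound for $\ell_2$, handle the generic case by showing $\pi_{i+2}(P_{2^k}(S^{2n+1}))$ is torsion via rational contractibility of the higher Goodwillie layers on an odd sphere, and handle $i=2n-1$ by identifying $H_*$ with the classical Hopf invariant through the connectivity of the tower, so that $\operatorname{im}(P_*)$ contributes at most one power of~$2$.

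There is one slip in your $i=2n-1$ analysis. When $n+1$ is odd you correctly note that $\pi_{2n+1}(S^{n+1})$ is finite, so $\operatorname{im}(H_*)=0$; but then $\operatorname{im}(P_*)=\mathbb{Z}/0\cong\mathbb{Z}$, which is \emph{not} a quotient of $\mathbb{Z}/2$. This does not actually break the argument: $\ell_2(\mathbb{Z})=0\le 1$, and your subadditivity inequality for a short exact sequence $0\to A\to B\to C\to 0$ is still valid with $A=\mathbb{Z}$ (indeed, any torsion element of $B$ mapping to $0$ in $C$ lies in $A$ and hence vanishes, so ${}_{(2)}B\hookrightarrow{}_{(2)}C$). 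The paper treats this subcase in exactly that spirit, observing directly that $P_*$ is injective and hence the torsion subgroup of $\pi_{2n-1}(P_{2^{k+1}}(S^n))$ embeds under $E_*$. So just replace ``is a quotient of $\mathbb{Z}/2$'' by ``has $\ell_2\le 1$'' and you are done.
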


The fact that we only have to add $1$ encodes the fact that Hopf invariant is never divisible by more than $2$.

The following lemma will be used to prove Corollary \ref{LemmainVersion}.

\begin{lemma} \label{firstCase} Let $A \xrightarrow{f} B \xrightarrow{g} C$ be an exact sequence of abelian groups, with $A$ torsion, and let $p$ be a prime. Then $\ell_p(B) \leq \ell_p(A) + \ell_p(C)$. \end{lemma}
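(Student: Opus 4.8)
Here is how I would prove Lemma~\ref{firstCase}.

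The plan is to factor the three-term exact sequence through the subgroup $D := \mathrm{im}(f) = \ker(g) \subseteq B$, and to bound $\ell_p(B)$ by treating separately the two ``halves'' of the short exact sequence $0 \to D \to B \to B/D \to 0$: namely $D$, which is a quotient of $A$, and $B/D \cong \mathrm{im}(g)$, which is a subgroup of $C$. Throughout one works in $[0,+\infty]$, so that the assertion is vacuous when the right-hand side is infinite.

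First I would record that applying the $p$-torsion-subgroup construction $_{(p)}(-)$ to $0 \to D \to B \to B/D$ produces an exact sequence $0 \to {_{(p)}D} \to {_{(p)}B} \to {_{(p)}(B/D)}$: indeed the kernel of the restriction ${_{(p)}B} \to B/D$ is ${_{(p)}B} \cap D = {_{(p)}D}$, since $D$ is a subgroup of $B$. Counting elements and taking $\log_p$ gives $\ell_p(B) \leq \ell_p(D) + \ell_p(B/D)$. Next I would bound the two terms on the right: since $B/D = B/\ker(g) \cong \mathrm{im}(g)$ is a subgroup of $C$, the group ${_{(p)}(B/D)}$ injects into ${_{(p)}C}$, so $\ell_p(B/D) \leq \ell_p(C)$; and $D = \mathrm{im}(f)$ is a quotient of $A$, hence torsion, and I claim $\ell_p(D) \leq \ell_p(A)$. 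Combining the three inequalities yields $\ell_p(B) \leq \ell_p(D) + \ell_p(B/D) \leq \ell_p(A) + \ell_p(C)$.

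The only non-formal step — and the only place the hypothesis that $A$ is torsion is used — is the claim that a quotient of a torsion abelian group has $p$-torsion subgroup no larger than the original. I would prove this from the primary decomposition $A \cong {_{(p)}A} \oplus A'$, where $A'$ is the sum of the $q$-primary components for primes $q \neq p$: given a surjection $\phi \colon A \to D$, any element of $D$ of $p$-power order lies in $\phi({_{(p)}A}) + \phi(A')$, and an element of $\phi(A')$ of $p$-power order must be zero, so ${_{(p)}D} = \phi({_{(p)}A})$ is a quotient of ${_{(p)}A}$, whence $\mathrm{Card}({_{(p)}D}) \leq \mathrm{Card}({_{(p)}A})$. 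This bound genuinely fails without the torsion hypothesis — witness $\mathbb{Z} \xrightarrow{\times p} \mathbb{Z}/p^2 \to \mathbb{Z}/p$, where the three values of $\ell_p$ are $0$, $2$, $1$ — so it is exactly this point at which I would expect to (briefly) spend care; the rest is bookkeeping with kernels and cokernels.
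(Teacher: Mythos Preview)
Your proof is correct. Both arguments reduce to showing that the functor $_{(p)}(-)$ behaves well enough on the given three-term sequence to yield the cardinality bound, and both identify the unique non-formal step as the place where the torsion hypothesis on $A$ enters. The organisation differs slightly: the paper works directly with $A \to B \to C$ and shows that ${_{(p)}A} \to {_{(p)}B} \to {_{(p)}C}$ is exact at the middle, proving the key step (any $y \in {_{(p)}B}$ hit from $A$ is hit from ${_{(p)}A}$) by an explicit element trick --- writing the order of a preimage $x$ as $up^a$ with $u$ coprime to $p$ and dividing by $u$ inside ${_{(p)}A}$. You instead factor through $D = \mathrm{im}(f)$, treat the short exact sequence $0 \to D \to B \to B/D \to 0$, and handle the torsion step structurally via the primary decomposition $A \cong {_{(p)}A} \oplus A'$. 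Your route makes the r\^ole of the torsion hypothesis slightly more transparent (it is exactly what guarantees the primary decomposition), while the paper's avoids introducing the auxiliary group $D$; neither approach is materially longer or more powerful than the other.
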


\begin{proof} First, note that if an element of $B$ has order a power of $p$, then its image in $C$ also has order a power of $p$. Second, note that any element of ${_{(p)}}B$ which is hit by an element of $A$ must actually be hit by an element of ${_{(p)}}A$. To see this, suppose that some nonzero $y \in {_{(p)}}B$ is the image of $x \in A$. Since $A$ is torsion, $x$ must have finite order. The order of $y$ is $p^b$, for some $b \in \mathbb{N}$, so the order of $x$ is $u p^a$, for $a \geq b$ and $u$ coprime to $p$. Then $ux$ lies in ${_{(p)}}A$, and $f(ux)=uy$. Multiplication by $u$ is an isomorphism on $p$-torsion subgroups, so there is a unique element $z$ of ${_{(p)}}A$ with $uz = ux$, and we must have $f(z) = u^{-1}f(ux) = y$, as desired.

From these two observations we obtain an exact sequence $${_{(p)}}A \to {_{(p)}}B \to {_{(p)}}C.$$ It follows that $\mathrm{Card}({_{(p)}}B) \leq \mathrm{Card}({_{(p)}}A) \mathrm{Card}({_{(p)}}C)$, and the result follows by taking logarithms. \end{proof}

\begin{proof}[Proof of Corollary \ref{LemmainVersion}] The point is that the behaviour of Behrens' EHP sequence is sufficiently well governed by the classical one that, morally, one need only establish the result in that setting.

Consider the following portion of the long exact sequence on homotopy groups induced by the fibration of Theorem \ref{BehrensLemma}: $$\dots \rightarrow \pi_{i+2}(P_{2^k}(S^{2n+1})) \xrightarrow{P_*} \pi_{i}(P_{2^{k+1}}(S^{n})) \xrightarrow{E_*} \pi_{i+1}(P_{2^{k+1}}(S^{n+1})) \rightarrow \dots $$

By \cite[Proposition 3.1]{AroneMahowald}, since $S^{2n+1}$ is an odd sphere, $D_{2^k}(S^{2n+1})$ (the homotopy fibre of $P_{2^k}(S^{2n+1}) \to P_{2^{k-1}}(S^{2n+1})$) is rationally contractible for $2^k > 1$, i.e. for $k > 0$. This means that $\pi_{i+2}(P_{2^k}(S^{2n+1}))$ contains a class of infinite order if and only if $\pi_{i+2}(S^{2n+1})$ does, which happens if and only if $i+2 = 2n+1$ \cite{Serre}. By Lemma \ref{firstCase}, the result then follows for $i \neq 2n-1$.

We therefore restrict attention to the case $i=2n-1$. Consider the following commutative diagram, where the rows are the relevant portions of the long exact sequences on homotopy groups obtained from the EHP sequences, and the vertical maps are the natural transformations of the Goodwillie Tower.

\begin{tikzcd} \pi_{2n+1}(S^{n+1}) \ar[r, "H_*"] \ar[d] & \pi_{2n+1}(S^{2n+1}) \ar[r, "P_*"] \ar[d] & \pi_{2n-1}(S^{n}) \ar[d] \\
\pi_{2n+1}(P_{2^{k+1}}(S^{n+1})) \ar[r, "H_*"] & \pi_{2n+1}(P_{2^k}(S^{2n+1})) \ar[r, "P_*"] & \pi_{2n-1}(P_{2^{k+1}}(S^{n})). 
\end{tikzcd}

By Lemma \ref{connectivity}, the middle vertical map is an isomorphism, and therefore $\pi_{2n+1}(P_{2^k}(S^{2n+1})) \cong \mathbb{Z}$. We are interested in the contribution made by this copy of $\mathbb{Z}$ to the 2-torsion in $\pi_{2n-1}(P_{2^{k+1}}(S^{n}))$.

If $n$ is even, then we have seen that $\pi_{2n+1}(P_{2^{k+1}}(S^{n+1}))$ does not contain a class of infinite order, so $H_*$ is trivial. By exactness, $P_*$ is an injection. This means that no contribution is made to the torsion. More precisely, continuing the sequence to the right, the torsion subgroup of $\pi_{2n-1}(P_{2^{k+1}}(S^{n}))$ maps injectively into that of $\pi_{2n}(P_{2^{k+1}}(S^{n+1}))$, so $\ell_2(\pi_{i}(P_{2^{k+1}}(S^{n}))) \leq \ell_2(\pi_{i+1}(P_{2^{k+1}}(S^{n+1})))$, which implies the first case of the result.

If $n$ is odd, then the image of $H_*$ in the top row certainly contains twice the generator of $\pi_{2n+1}(S^{2n+1})$. Since the middle vertical is an isomorphism, this must also be true in the bottom row, so, again extending to the right, we obtain an exact sequence $$A \to \pi_{2n-1}(P_{2^{k+1}}(S^{n})) \to \pi_{2n}(P_{2^{k+1}}(S^{n+1})),$$ with $A$ equal to either $\mathbb{Z}/2$ or $0$. Applying Lemma \ref{firstCase} to this sequence then gives the first case of the result, as required. \end{proof}

\section{Representation by power series}

In this section, following Burklund and Senger, we reframe the discussion in terms of certain power series. As far as possible, our notation differs from theirs only in the addition of the Goodwillie parameter $k$. At this stage it is convenient to change from unstable to stable coordinates (replacing $\pi_i(S^n)$ with $\pi_{q+n}(S^n)$).

Let $$\mathcal{L}(k,n;t) := 1 + \sum_{q=1}^\infty \ell_2(\pi_{q+n}P_{2^{k}}S^{n}) \cdot t^q.$$

The addition of the constant term eliminates the case statement in Corollary \ref{LemmainVersion}, and we obtain the following.

\begin{lemma} \label{LemmaL} For all $k,n$ we have the coefficient-wise inequality of formal power series \[
\pushQED{\qed} 
\mathcal{L}(k, n ; t) \leq \mathcal{L}(k, n+1 ; t) + \mathcal{L}(k, 2n+1 ; t) \cdot t^{n-1}. \qedhere
\popQED
\]  \end{lemma}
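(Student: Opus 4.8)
The plan is to derive Lemma~\ref{LemmaL} directly from Corollary~\ref{LemmainVersion} by comparing coefficients of $t^q$ on both sides, where $q$ ranges over the stable coordinate so that the unstable degree is $i = q+n$. First I would fix $k$ and $n$ and unpack the definitions: the coefficient of $t^q$ on the left-hand side is $\ell_2(\pi_{q+n}(P_{2^k}(S^n)))$ for $q \geq 1$ and $1$ for $q=0$; on the right-hand side, the coefficient of $t^q$ in $\mathcal{L}(k,n+1;t)$ is $\ell_2(\pi_{q+n+1}(P_{2^k}(S^{n+1})))$ for $q\geq 1$ and $1$ for $q = 0$, while the coefficient of $t^q$ in $\mathcal{L}(k,2n+1;t)\cdot t^{n-1}$ is the coefficient of $t^{q-n+1}$ in $\mathcal{L}(k,2n+1;t)$, which is $\ell_2(\pi_{(q-n+1)+(2n+1)}(P_{2^k}(S^{2n+1}))) = \ell_2(\pi_{q+n+2}(P_{2^k}(S^{2n+1})))$ when $q - n + 1 \geq 1$, equals $1$ when $q = n-1$, and equals $0$ when $q < n-1$.

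Next I would match this against Corollary~\ref{LemmainVersion}, applied with $k+1$ in place of its ``$k+1$'' — that is, I read the corollary with its exponent $2^{k+1}$ identified with our $2^k$, so its ``$k$'' is our $k-1$; this is harmless since $k$ is arbitrary, but to keep notation clean I would instead just restate the corollary's two inequalities as $\ell_2(\pi_i(P_{2^k}(S^n))) \leq \ell_2(\pi_{i+1}(P_{2^k}(S^{n+1}))) + 1$ when $i = 2n-1$, and $\ell_2(\pi_i(P_{2^k}(S^n))) \leq \ell_2(\pi_{i+1}(P_{2^k}(S^{n+1}))) + \ell_2(\pi_{i+2}(P_{2^{k-1}}(S^{2n+1})))$ otherwise. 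Wait — the exponent on the $S^{2n+1}$ term drops by one in the corollary, but in Lemma~\ref{LemmaL} it does not. I would need to note that $P_{2^{k-1}}(S^{2n+1}) \to P_{2^{k}}(S^{2n+1})$ together with the fact that homotopy in the relevant range is unchanged, or more simply that $\ell_2(\pi_{i+2}(P_{2^{k-1}}(S^{2n+1}))) \leq \ell_2(\pi_{i+2}(P_{2^{k}}(S^{2n+1})))$; this monotonicity in the Goodwillie parameter is the one genuinely substantive point and I would either cite it or observe that it follows because $P_{2^{k-1}}(S^{2n+1})$ is a retract-up-to-higher-filtration — in fact it is cleanest to verify it via the long exact sequence of the fibration $D_{2^k} \to P_{2^k} \to P_{2^{k-1}}$ and Lemma~\ref{firstCase}. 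Actually, re-examining: since Lemma~\ref{LemmaL} is stated with the same $k$ throughout and Corollary~\ref{LemmainVersion} has a shift, I would state the lemma's proof as applying the corollary and then absorbing the exponent shift into the inequality using this monotonicity.

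With these pieces in hand, the proof is a case check on $q$. For $q \geq n$ we have $i = q+n > 2n-1$, so the ``otherwise'' case of the corollary applies and gives exactly the coefficient-wise inequality after the exponent-shift adjustment. For $q = n-1$ we have $i = 2n-1$, the special case applies, and the ``$+1$'' on the right matches the ``$1$'' coming from the constant term of $\mathcal{L}(k,2n+1;t)$ sitting in degree $t^{n-1}$ — this is precisely the bookkeeping trick the paragraph before the lemma alludes to. For $1 \leq q < n-1$ we again have $i = q + n$; here $i < 2n - 1$, so... wait, I should double check: if $q < n-1$ then $i = q+n < 2n-1$, which is still the ``otherwise'' case, giving $\ell_2(\pi_{q+n}(P_{2^k}(S^n))) \leq \ell_2(\pi_{q+n+1}(P_{2^k}(S^{n+1}))) + \ell_2(\pi_{q+n+2}(P_{2^{k-1}}(S^{2n+1})))$, but the right-hand side of Lemma~\ref{LemmaL} has coefficient $0$ from the $t^{n-1}$-shifted term in this range. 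So I would need that $\ell_2(\pi_{q+n+2}(P_{2^{k-1}}(S^{2n+1}))) = 0$ when $q < n - 1$, i.e. that $\pi_j(P_{2^{k-1}}(S^{2n+1}))$ has no $2$-torsion for $j < 2n+1$; this follows from Lemma~\ref{connectivity} (the map from $S^{2n+1}$ is an iso on $\pi_j$ for $j \le (2^{k-1}+1)\cdot 2n$, comfortably including all $j \le 2n$) combined with the fact that $\pi_j(S^{2n+1}) = 0$ for $j < 2n+1$. Finally $q = 0$ is the constant term, $1 \leq 1 + 1$, trivially true.

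The main obstacle I anticipate is getting the Goodwillie-parameter bookkeeping exactly right: the corollary shifts $2^k \rightsquigarrow 2^{k-1}$ on the Hopf-invariant target while the lemma does not, so the proof must invoke monotonicity $\ell_2(\pi_j(P_{2^{k-1}}(X))) \leq \ell_2(\pi_j(P_{2^k}(X)))$ — which, to be safe, I would justify once and for all via the fibration $D_{2^k}(X) \to P_{2^k}(X) \to P_{2^{k-1}}(X)$, whose boundary map gives an exact sequence $\pi_{j+1}(P_{2^{k-1}}(X)) \leftarrow \pi_j(D_{2^k}(X)) \to \pi_j(P_{2^k}(X)) \to \pi_j(P_{2^{k-1}}(X))$ and hence, by a mild variant of Lemma~\ref{firstCase} applied on the torsion subgroups, the desired inequality (alternatively this is standard and could simply be cited). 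Everything else is elementary once the indices are aligned; the role of the added constant term $1$ in $\mathcal{L}$ is, as the authors note, exactly to convert the two-case statement of Corollary~\ref{LemmainVersion} into the single clean inequality of the lemma.
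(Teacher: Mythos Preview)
Your overall strategy---compare coefficients of $t^q$ and invoke Corollary~\ref{LemmainVersion}---is exactly what the paper intends; the paper offers no proof beyond the sentence preceding the lemma. You also correctly spot the real issue: Corollary~\ref{LemmainVersion} drops the Goodwillie parameter by one on the $S^{2n+1}$ term, while Lemma~\ref{LemmaL} as printed keeps the same $k$ throughout.

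Your proposed fix, however, does not work. The monotonicity claim
\[
\ell_2\bigl(\pi_j(P_{2^{k-1}}(X))\bigr)\ \leq\ \ell_2\bigl(\pi_j(P_{2^{k}}(X))\bigr)
\]
is false. Take $X=S^3$, $j=6$, $k=1$: then $\pi_6(P_1(S^3))\cong\pi_3^S\cong\mathbb{Z}/24$ has $\ell_2=3$, whereas Lemma~\ref{connectivity} (with $(2^1+1)(3-1)=6$) gives $\pi_6(P_2(S^3))\cong\pi_6(S^3)\cong\mathbb{Z}/12$, so $\ell_2=2<3$. Your justification via the layer fibration also points the wrong way: applying Lemma~\ref{firstCase} to the exact piece $\pi_j(D_{2^k})\to\pi_j(P_{2^k})\to\pi_j(P_{2^{k-1}})$ bounds $\ell_2(\pi_j(P_{2^k}))$ above, not $\ell_2(\pi_j(P_{2^{k-1}}))$.

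The correct resolution is that the printed statement contains a typo: the $(2n+1)$-term should read $\mathcal{L}(k-1,2n+1;t)$, exactly mirroring both Corollary~\ref{LemmainVersion} and Lemma~\ref{LemmaA} (which carries the same shift). With that correction your coefficient-by-coefficient case analysis---including the handling of $q=n-1$ via the constant term and of $q<n-1$ via connectivity of $P_{2^{k-1}}(S^{2n+1})$---goes through verbatim, and the proof of Lemma~\ref{raisonDetre} (which has the same slip, writing $A(k,2n+1)$ where Lemma~\ref{LemmaA} gives $A(k-1,2n+1)$) then runs as a double induction: first on $k$, then downward on $n$.
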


For the corresponding stable object, let $\mathcal{L}^S(t)$ be the formal power series encoding the 2-local sizes $\ellS$. Formally: $$\mathcal{L}^S(t) = 1 + \sum_{q=1}^\infty \ellS(q) \cdot t^q.$$

Next, let $$I(k,n) = \{(i_1, \dots, i_k) \ssep i_k \geq n \textrm{ and } i_j \geq 2 i_{j+1}+1\}.$$ We include the empty sequence, which is regarded as the sole element of $I(0,n)$. In the language of \cite{Behrens}, this is the set of completely unadmissible sequences of excess $n$ and length $k$. We regard $I(k,n)$ as a graded set, by letting $$\dim(i_1, \dots, i_k) = \sum_j(i_j-1).$$ The empty sequence is regarded as having dimension $0$.
Let $A(k, n ; t)$ be the formal power series in $z$ defined by $$A(k, n ; t) = \sum_{q=0}^{\infty} \lvert \bigsqcup_{j \leq k} I(j,n)_q \rvert \cdot t^q,$$ and write $A(n;t)=A(\infty,n;t)$.

The following lemma reproduces Burklund and Senger's Lemma A.18 with the addition of the parameter $k$. The proof is identical.

\begin{lemma} \label{LemmaA} For all $k,n$ we have
\[
\pushQED{\qed} 
A(k+1, n ; t) = A(k+1, n+1 ; t) + A(k, 2n+1 ; t) \cdot t^{n-1}. \qedhere
\popQED
\] 
\end{lemma}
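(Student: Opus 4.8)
The plan is to prove the identity combinatorially, since by definition $A(k,n;t)$ is the generating function (graded by $\dim$) of the set $\bigsqcup_{j \leq k} I(j,n)$ of completely unadmissible sequences of length at most $k$ and excess at least $n$. So it suffices to exhibit a dimension-preserving bijection realising the right-hand side as a partition of the indexing set of the left-hand side. Following Burklund and Senger's Lemma A.18, I would partition $\bigsqcup_{j \leq k+1} I(j,n)$ according to the value of the last entry $i_j$ of a sequence: either the sequence is empty or $i_j \geq n+1$, or $i_j = n$ (these are the only options, since $i_j$ is an integer with $i_j \geq n$).

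First I would identify the first block. The unadmissibility conditions $i_\ell \geq 2 i_{\ell+1}+1$ do not involve the excess, and the empty sequence lies in $I(0,m)$ for every $m$; hence the empty sequence together with all nonempty sequences in $\bigsqcup_{j \leq k+1} I(j,n)$ with last entry $\geq n+1$ is exactly $\bigsqcup_{j \leq k+1} I(j, n+1)$. This block therefore contributes $A(k+1, n+1 ; t)$.

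Next I would treat the second block, the nonempty sequences $(i_1, \dots, i_j)$ with $1 \leq j \leq k+1$ and $i_j = n$. Deleting the last entry gives a map $(i_1, \dots, i_{j-1}, n) \mapsto (i_1, \dots, i_{j-1})$ into $\bigsqcup_{j' \leq k} I(j', 2n+1)$: when $j=1$ the image is the empty sequence, the unique element of $I(0, 2n+1)$; when $j \geq 2$, the constraint $i_{j-1} \geq 2 i_j + 1 = 2n+1$ is precisely the requirement that the truncated sequence lie in $I(j-1, 2n+1)$, while the remaining unadmissibility conditions transfer verbatim. Appending $n$ provides a two-sided inverse, so this is a bijection, and since $\dim(i_1, \dots, i_{j-1}, n) = \dim(i_1, \dots, i_{j-1}) + (n-1)$ it shifts the grading by $n-1$. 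Hence this block contributes $t^{n-1} \cdot A(k, 2n+1 ; t)$, and summing the two blocks yields the stated equality.

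I do not expect a genuine obstacle; the argument is pure bookkeeping. The only points that need care — and the same ones arise in Burklund and Senger's proof — are the placement of the empty sequence (it belongs with the $A(k+1, n+1 ; t)$ term, even though it has no last entry to which the dichotomy literally applies) and the two simultaneous reindexings: the length parameter drops from $k+1$ to $k$ because truncation removes an entry, and the grading shifts by $n-1$, the dimension of the single deleted entry $(n)$.
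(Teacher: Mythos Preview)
Your proposal is correct and follows exactly the approach the paper intends: the paper omits the proof, stating only that it is identical to Burklund and Senger's Lemma A.18 with the parameter $k$ added, and your combinatorial bijection (partitioning $\bigsqcup_{j\le k+1} I(j,n)$ according to whether the last entry is $\ge n+1$ or equal to $n$, then truncating in the latter case) is precisely that argument. The bookkeeping points you flag --- the empty sequence going with the $A(k+1,n+1;t)$ term, the length drop from $k+1$ to $k$, and the grading shift by $n-1$ --- are handled correctly.
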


The point is that $A(k, n ; t)$ controls the relationship between the stable groups and the $2^k$-th Goodwillie filtration in the following sense.

\begin{lemma} \label{raisonDetre} We have the coefficient-wise inequality of formal power series $$ \mathcal{L}(k,n;t) \leq A(k,n ; t) \cdot \mathcal{L}^S(t).$$
\end{lemma}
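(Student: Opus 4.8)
The plan is to prove the bound $\mathcal{L}(k,n;t) \leq A(k,n;t) \cdot \mathcal{L}^S(t)$ by a double induction, using Lemma \ref{LemmaL} for the recursive step and Lemma \ref{LemmaA} to keep the combinatorial factor $A$ in lockstep.

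First I would dispose of the base cases. When $k=0$, the Goodwillie approximation $P_1(S^n)$ is $2$-locally $\Omega^\infty \Sigma^\infty S^n$ (up to the relevant range), so $\ell_2(\pi_{q+n}P_1 S^n) = \ellS(q)$, while $I(0,n)$ consists only of the empty sequence in dimension $0$, giving $A(0,n;t)=1$; hence $\mathcal{L}(0,n;t) = \mathcal{L}^S(t) = A(0,n;t)\cdot\mathcal{L}^S(t)$ with equality. For fixed $k$, I also need a base case in $n$: by Lemma \ref{connectivity}, $\mathcal{L}(k,n;t)$ agrees with $\mathcal{L}^S(t)$ through degree roughly $(2^k+1)(n-1)$, so for any target coefficient $t^q$ one can choose $n$ large enough that the inequality holds trivially in that degree (indeed $A(k,n;t) \geq 1$ always, since the empty-sequence-padded unions always contribute the constant term, wait — more carefully, $A(k,n;t)$ has constant term $1$ coming from $I(0,n)$, and its higher coefficients are nonnegative, so $A(k,n;t)\cdot\mathcal{L}^S(t) \geq \mathcal{L}^S(t)$ coefficient-wise).

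The inductive step is then a descent on a well-chosen quantity. Suppose the inequality $\mathcal{L}(k',n';t) \leq A(k',n';t)\cdot\mathcal{L}^S(t)$ is known whenever $k' < k$ (any $n'$), or $k'=k$ and $n' > n$. By Lemma \ref{LemmaL},
\[
\mathcal{L}(k,n;t) \leq \mathcal{L}(k,n+1;t) + \mathcal{L}(k,2n+1;t)\cdot t^{n-1}.
\]
The first term on the right is handled by the $n'=n+1$ inductive hypothesis; the second by the $k'=k-1$ hypothesis applied at $S^{2n+1}$ — here one uses that Behrens' sequence drops the Goodwillie index from $2^{k+1}$ to $2^k$ on the fibre, so that the relevant Goodwillie parameter for the $\Omega S^{2n+1}$ factor is genuinely one less. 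Substituting gives
\[
\mathcal{L}(k,n;t) \leq A(k,n+1;t)\cdot\mathcal{L}^S(t) + A(k-1,2n+1;t)\cdot t^{n-1}\cdot\mathcal{L}^S(t),
\]
and by Lemma \ref{LemmaA} (with $k$ in place of $k+1$) the bracketed sum of $A$'s is exactly $A(k,n;t)$, completing the step. One must check that this recursion terminates: tracking the coefficient of a fixed $t^q$, each application of Lemma \ref{LemmaL} either increases $n$ (which by Lemma \ref{connectivity} eventually lands in the stable range where the base case applies) or decreases $k$ (which after finitely many steps reaches $k=0$); since the $\dim$ grading on $I(k,n)$ and the connectivity estimate both force only finitely many pairs $(k,n)$ to be relevant in any fixed degree, the induction is well-founded.

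The main obstacle, and the step requiring the most care, is setting up the induction so that the two recursive calls in Lemma \ref{LemmaL} are each genuinely covered by the hypothesis and the process provably terminates — in particular making precise the interplay between the connectivity bound of Lemma \ref{connectivity} (which makes $\mathcal{L}(k,n;\cdot)$ agree with the stable series in a range growing with $n$) and the fact that $A(k,n;t)$'s positive-degree coefficients also eventually vanish in low degrees as $n$ grows, so that the base case is available exactly where it is needed. Everything else is a formal manipulation of nonnegative power series, and the identity matching $A(k,n+1;t) + A(k-1,2n+1;t)t^{n-1}$ to $A(k,n;t)$ is precisely Lemma \ref{LemmaA}, already granted.
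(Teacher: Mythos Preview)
Your proposal is correct and follows the same route as the paper: base case $k=0$, then a double induction (downward on $n$, upward on $k$) in which Lemmas \ref{LemmaL} and \ref{LemmaA} are played off against each other, with well-foundedness coming from the fact that in each fixed degree only finitely many $n$ contribute. One cosmetic point: having (rightly) observed that Behrens' sequence drops the Goodwillie index on the fibre, you should write the second term as $\mathcal{L}(k-1,2n+1;t)$ rather than quoting Lemma \ref{LemmaL} verbatim with $\mathcal{L}(k,2n+1;t)$, so that the bound $A(k-1,2n+1;t)\cdot\mathcal{L}^S(t)$ you go on to use genuinely follows from the $k'=k-1$ inductive hypothesis.
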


\begin{proof} When $k=0$, the left hand side is just $\mathcal{L}^S(t)$, so the result follows since the empty sequence gives $A(k;n,t)$ constant term 1.

Now suppose $k \geq 1$. Assume the result holds for all $m > n$. This downwards induction is valid since for each fixed $q$, the coefficient of $t^q$ on the left hand side is zero for large $n$.

Then Lemmas \ref{LemmaA} and \ref{LemmaL}, together with the inductive hypothesis, give
\begin{align*} A(k,n ; t) \cdot \mathcal{L}^S(t) & = (A(k, n+1 ; t) + A(k, 2n+1 ; t) \cdot t^{n-1}) \cdot \mathcal{L}^S(t) \\
& = A(k, n+1 ; t) \cdot \mathcal{L}^S(t) + A(k, 2n+1 ; t) \cdot \mathcal{L}^S(t) \cdot t^{n-1} \\
& \geq \mathcal{L}(k, n+1 ; t) + \mathcal{L}(k, 2n+1 ; t) \cdot t^{n-1} \\
& \geq \mathcal{L}(k, n ; t),
\end{align*}
as required.
\end{proof}

\section{Bounding $A(k,n;t)$}

Burklund and Senger show \cite[Lemma A.20]{BurklundSenger} that the coefficients of $A(n;t)=A(\infty,n;t)$ are bounded above by counts of admissible sequences in the Steenrod algebra. They then use the fact that the dual Steenrod algebra is polynomial on generators in degrees roughly powers of $2$ to bound these counts, using work of Mahler \cite{Mahler}. Our goal in this section is to describe the corresponding situation for finite $k$ and recover the case $k = \infty$ as the limit.

Recall that the Steenrod Algebra $\mathcal{A}$ has a $\mathbb{Z}/2$-module basis consisting of \emph{admissible monomials} $\Sq^{i_1} \dots \Sq^{i_j}$ with $i_j \geq 2i_{j+1}$. Welcher \cite{Welcher} discusses a certain family of graded $\mathbb{Z}/2$-vector spaces $M_k$, which may be regarded as submodules of $\mathcal{A}$. Concretely, $M_k$ has a basis consisting of those admissible monomials $\Sq^{i_1} \dots \Sq^{i_j}$ with $j \geq k+1$ and $i_j \geq 2$.

For a graded $\mathbb{Z}/2$-vector space $V$, write $P(V;t)$ for the Poincar\'e series $\sum_{q=0}^\infty \dim_{\mathbb{Z}/2}(V_q) \cdot t^q$. The next lemma essentially just adds the parameter $k$ to Burklund and Senger's Lemma A.20. The proof is identical.

\begin{lemma} \label{toSteenrod} For $n \geq 3$, we have the coefficient-wise inequality
\[
\pushQED{\qed} 
A(k, n ; t) \leq 1 + P(M_0/M_k;t). \qedhere
\popQED
\] 
\end{lemma}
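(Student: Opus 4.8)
The plan is to realize the coefficient-wise inequality by constructing an explicit, grading-preserving injection from the graded set underlying $A(k,n;t)$ into the monomial basis of $M_0/M_k$; once such an injection is in hand, comparing the generating functions is immediate.

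First I would pin down the right-hand side combinatorially. For any $k \geq 0$, the monomial basis of Welcher's $M_k$ --- the admissible monomials $\Sq^{i_1}\cdots\Sq^{i_j}$ with $j \geq k+1$ and $i_j \geq 2$ --- is a subset of that of $M_0$, so $M_k$ is a graded vector subspace of $M_0$, and $M_0/M_k$ has basis the admissible monomials $\Sq^{i_1}\cdots\Sq^{i_j}$ with $1 \leq j \leq k$ and $i_j \geq 2$, with $\Sq^{i_1}\cdots\Sq^{i_j}$ placed in degree $\sum_l i_l$. In particular $(M_0/M_k)_0 = 0$, so the $t^q$-coefficient of $1 + P(M_0/M_k;t)$ is $1$ for $q = 0$ and is the number of such admissible monomials of degree $q$ for $q \geq 1$.

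Next I would define the map. A nonempty element $(i_1,\ldots,i_j) \in I(j,n)$ with $1 \leq j \leq k$ is sent to $\Sq^{i_1-1}\cdots\Sq^{i_j-1}$. The completely unadmissible inequality $i_l \geq 2 i_{l+1}+1$ rearranges to $i_l - 1 \geq 2(i_{l+1}-1) + 2$, which in particular gives Steenrod-admissibility $i_l - 1 \geq 2(i_{l+1}-1)$ of the shifted sequence; and $i_j \geq n$ gives $i_j - 1 \geq n - 1 \geq 2$, which is the one and only place the hypothesis $n \geq 3$ is used. So the image really lies in the basis of $M_0/M_k$ described above. The map is injective, since the original sequence is recovered by adding $1$ to every entry (and admissible monomials of different length are distinct basis elements, so there are no collisions across different $j$), and it preserves the grading because $\deg\big(\Sq^{i_1-1}\cdots\Sq^{i_j-1}\big) = \sum_l (i_l - 1) = \dim(i_1,\ldots,i_j)$. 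The empty sequence, the unique element of $I(0,n)$, has dimension $0$ and is matched by the leading $1$ on the right; it is also the only element of $\bigsqcup_{j \leq k} I(j,n)$ in dimension $0$, since any length-$\geq 1$ sequence has all entries $\geq n \geq 3$ and hence positive dimension.

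Assembling these facts, for each $q$ the coefficient of $t^q$ in $A(k,n;t)$ is at most the coefficient of $t^q$ in $1 + P(M_0/M_k;t)$, which is exactly the claim. I do not expect a genuine obstacle here: the whole content is the small observation that the ``completely unadmissible'' condition is a shift of Steenrod-admissibility, carrying the excess condition $i_j \geq n$ to the tail condition $i_j - 1 \geq 2$ precisely when $n \geq 3$. The only places calling for a little care are the bookkeeping of the constant term --- checking $(M_0/M_k)_0 = 0$ so that the ``$+1$'' matches the empty sequence and nothing is double-counted --- and noting that $M_k$ is genuinely contained in $M_0$, so that the quotient has the expected monomial basis.
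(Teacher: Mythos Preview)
Your proposal is correct and follows essentially the same approach as the paper, which simply defers to Burklund and Senger's Lemma A.20: the key observation is the grading-preserving injection $(i_1,\ldots,i_j) \mapsto \Sq^{i_1-1}\cdots\Sq^{i_j-1}$ from completely unadmissible sequences of length $\leq k$ into the admissible-monomial basis of $M_0/M_k$, with the empty sequence accounted for by the constant term. Your careful bookkeeping of the constant term and the use of $n \geq 3$ to guarantee $i_j - 1 \geq 2$ are exactly the points needing attention.
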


Welcher notes the formula for the Poincar\'e series of the successive quotients $M_{k-1}/M_k$ (in his statement there appears to be a typo in the indexing on the product). It can be seen by noticing that admissibility is equivalent to $(i_1,\dots,i_k)$ being of the form $$(2^{k}+2^{k-1}r_k+ \dots + 2r_2+r_1, \dots, 4+2r_k+r_{k-1},2+r_k)$$ for $r_1, \dots, r_k \geq 0.$ The degree of this expression is an affine function of the $r_i$. This means that the Poincar\'e series is that of a degree-shifted polynomial algebra, as follows.

\begin{lemma}{\cite{Welcher}} \label{WelcherThm} For $k \geq 1,$ \[
\pushQED{\qed} 
P(M_{k-1}/M_k;t) = \frac{t^{2^{k+1}-2}}{\prod_{1 \leq i \leq k}(1-t^{2^{i}-1})}. \qedhere
\popQED
\] 
\end{lemma}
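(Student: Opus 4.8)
The plan is to verify the claimed Poincar\'e series directly from the parametrization of admissible monomials already given in the paragraph preceding the statement. The module $M_{k-1}/M_k$ has a $\mathbb{Z}/2$-basis consisting of those admissible monomials $\Sq^{i_1}\cdots\Sq^{i_k}$ of length exactly $k$ with $i_k \geq 2$ (these are the basis elements of $M_{k-1}$ that do not survive to $M_k$, since $M_k$ requires length $\geq k+1$). I would first record that, by the admissibility relations $i_j \geq 2i_{j+1}$ together with $i_k \geq 2$, every such sequence is uniquely of the form
\[
(i_1,\dots,i_k) = (2^{k}+2^{k-1}r_k+\dots+2r_2+r_1,\ \dots,\ 4+2r_k+r_{k-1},\ 2+r_k)
\]
for a unique tuple $(r_1,\dots,r_k)$ of nonnegative integers, and conversely every such tuple yields an admissible sequence with $i_k \geq 2$. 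This is a bijection between the basis of $M_{k-1}/M_k$ and $\mathbb{Z}_{\geq 0}^k$, so the main content is checking it carefully: solving the relations from the bottom up shows $i_k = 2 + r_k$ with $r_k \geq 0$ arbitrary, then $i_{k-1} = 2i_k + r_{k-1} = 4 + 2r_k + r_{k-1}$, and inductively $i_j = 2i_{j+1} + r_j$, which telescopes to the displayed closed form; the constant term $2^k$ in $i_1$ comes from $2^{k-1}\cdot 2$ propagated up through the doublings.

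Next I would compute the degree. Since $\deg(\Sq^{i_1}\cdots\Sq^{i_k}) = \sum_j i_j$, and $i_j = \sum_{\ell \geq j} 2^{\ell-j} r_\ell$ shifted by the constant part, summing over $j$ gives a constant plus $\sum_{\ell=1}^k c_\ell r_\ell$ where $c_\ell = \sum_{j=1}^{\ell} 2^{\ell - j} = 2^\ell - 1$. The constant is the degree of the monomial with all $r_\ell = 0$, namely the sequence $(2^k, 2^{k-1},\dots,4,2)$, whose degree is $\sum_{i=1}^{k} 2^i = 2^{k+1} - 2$. Therefore the generating function for the dimensions of $M_{k-1}/M_k$, graded by degree, is
\[
\sum_{(r_1,\dots,r_k)\in \mathbb{Z}_{\geq 0}^k} t^{\,2^{k+1}-2 + \sum_{\ell=1}^{k}(2^\ell-1)r_\ell} = t^{2^{k+1}-2}\prod_{i=1}^{k}\Bigl(\sum_{r\geq 0} t^{(2^i-1)r}\Bigr) = \frac{t^{2^{k+1}-2}}{\prod_{1\leq i\leq k}(1-t^{2^i-1})},
\]
which is exactly the claimed formula.

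I do not expect any serious obstacle here; the statement is essentially a bookkeeping computation, and the paragraph in the text has already isolated the one nontrivial input, namely the parametrization of admissibles. The only point requiring care is confirming that the indexing on the product runs $1 \leq i \leq k$ and that the exponents are $2^i - 1$ (not $2^{i}+1$ or similar) — this is precisely where the text flags a typo in Welcher's original statement, so I would double-check the exponent computation $c_\ell = 2^\ell - 1$ and the constant $2^{k+1}-2$ against the small cases $k=1$ (where $M_0/M_1$ has basis $\Sq^{i_1}$ with $i_1 \geq 2$, giving Poincar\'e series $t^2 + t^3 + \dots = t^2/(1-t)$, matching $t^{2^2-2}/(1-t^{2^1-1})$) and $k=2$ before committing to the general formula. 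Since Lemma \ref{WelcherThm} is cited as \cite{Welcher} and the text explicitly says the formula "can be seen by" the parametrization it has just given, writing out this verification in the order above constitutes a complete proof.
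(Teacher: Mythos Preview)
Your proposal is correct and follows exactly the approach sketched in the paragraph preceding the lemma: parametrize the length-$k$ admissible monomials with $i_k\geq 2$ by $(r_1,\dots,r_k)\in\mathbb{Z}_{\geq 0}^k$, compute that the degree is the affine function $2^{k+1}-2+\sum_\ell (2^\ell-1)r_\ell$, and read off the Poincar\'e series as that of a shifted polynomial algebra. You have simply made explicit the details the paper leaves implicit (the constant term, the coefficients $2^\ell-1$, and the sanity check at $k=1$), so there is nothing to add.
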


\begin{remark} \label{APoincare}  We prefer to go via $P(M_{k-1}/M_k;t)$ because it leads more cleanly to a result which seems to be just as good for practical purposes, but using precisely the same reasoning as above, one can state an analogous formula for $A(k,n;t)$ directly. For $n \geq 1$ we have $$A(k,n;t) = \sum_{j=1}^k \frac{t^{(n+1)(2^j-1)-2j}}{\prod_{1 \leq i \leq j}(1-t^{2^j-1})}.$$ \end{remark}

We are now ready to prove our main technical result.

\begin{theorem} \label{secretMain} For $n \geq 3$, we have the coefficient-wise inequality $$A(k,n;t) \leq 1 + \sum_{q=1}^\infty ( \sum_{j=1}^k \frac{q^{j-1}}{(j-1)! \cdot 2^{\frac{1}{2}j(j-1)}}) \cdot t^q.$$ \end{theorem}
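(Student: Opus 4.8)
The plan is to combine Lemma~\ref{toSteenrod}, which gives $A(k,n;t) \leq 1 + P(M_0/M_k;t)$ for $n \geq 3$, with Welcher's formula (Lemma~\ref{WelcherThm}). Since $M_0/M_k$ has a filtration with associated graded $\bigoplus_{j=1}^k M_{j-1}/M_j$, its Poincar\'e series is the sum $P(M_0/M_k;t) = \sum_{j=1}^k P(M_{j-1}/M_j;t) = \sum_{j=1}^k \frac{t^{2^{j+1}-2}}{\prod_{1 \leq i \leq j}(1-t^{2^i-1})}$. So it suffices to prove, term by term in $j$, the coefficient-wise inequality
\[
\frac{t^{2^{j+1}-2}}{\prod_{1 \leq i \leq j}(1-t^{2^i-1})} \leq \sum_{q=1}^\infty \frac{q^{j-1}}{(j-1)! \cdot 2^{\frac{1}{2}j(j-1)}} \cdot t^q,
\]
and then sum over $j$ from $1$ to $k$.

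First I would reduce to understanding the coefficients of $\frac{1}{\prod_{1\leq i \leq j}(1-t^{d_i})}$, where $d_i = 2^i - 1$. The coefficient of $t^m$ here counts the number of ways to write $m = \sum_{i=1}^j a_i d_i$ with $a_i \geq 0$; call this $c_j(m)$. This is a quasi-polynomial in $m$ of degree $j-1$ with leading coefficient $\frac{1}{(j-1)! \cdot d_1 d_2 \cdots d_j}$. The numerator shift by $t^{2^{j+1}-2}$ means we actually want to bound $c_j(q - (2^{j+1}-2))$ by $\frac{q^{j-1}}{(j-1)! \cdot 2^{\frac{1}{2}j(j-1)}}$ for all $q \geq 1$ (with $c_j$ of a negative argument understood to be $0$). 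The key numerical input is that $\prod_{i=1}^j d_i = \prod_{i=1}^j (2^i - 1) \geq \prod_{i=1}^j 2^{i-1} = 2^{\frac{1}{2}j(j-1)}$, so the leading coefficient of $c_j$ is at most $\frac{1}{(j-1)! \cdot 2^{\frac{1}{2}j(j-1)}}$, matching the claimed bound exactly.

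The main obstacle is that $c_j(m)$ is only a quasi-polynomial, not a polynomial, so one cannot simply say "$c_j(m) \leq \frac{m^{j-1}}{(j-1)!\prod d_i}$" — the lower-order fluctuating terms must be shown not to push it over the bound, and the argument shift by $2^{j+1}-2$ must be used to absorb them. The cleanest route I would take is induction on $j$: writing $c_j(m) = \sum_{a \geq 0} c_{j-1}(m - a d_j)$, one bounds this sum by comparing it to an integral $\frac{1}{d_j}\int_0^m c_{j-1}(x)\,dx$ plus an error controlled by the shift. Concretely, I would prove by induction the stronger statement that the coefficient of $t^q$ in $\frac{t^{2^{j+1}-2}}{\prod_{1\leq i \leq j}(1-t^{2^i-1})}$ is at most $\frac{q^{j-1}}{(j-1)!\,\prod_{i=1}^j d_i}$ for all $q \geq 1$; the base case $j=1$ is the statement that $t^{2}/(1-t)$ has all coefficients $\leq 1 = q^0/(0! \cdot 1)$, which is clear. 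For the inductive step, the shift $2^{j+1}-2 = 2(2^j - 1) + (2^j - 2) \geq 2 d_j$ provides enough "head start" that the discrete sum $\sum_a (\text{shifted } c_{j-1})$ stays below the integral bound $\frac{1}{d_j}\cdot \frac{q^{j-1}}{(j-1)!\prod_{i<j} d_i}$, using monotonicity of $x \mapsto x^{j-2}$ on the positive reals and a standard sum-versus-integral comparison. Once the per-$j$ bound is established, summing over $j$ and applying $\prod_{i=1}^j d_i \geq 2^{\frac12 j(j-1)}$ yields the theorem.
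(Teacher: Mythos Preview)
Your proposal is correct and shares the paper's setup exactly: reduce to $P(M_0/M_k;t)$ via Lemma~\ref{toSteenrod}, write this as $\sum_{j=1}^k P(M_{j-1}/M_j;t)$ using Welcher's formula, and bound term by term, with the key numerical input $\prod_{i=1}^j(2^i-1)\geq 2^{\frac{1}{2}j(j-1)}$.

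The difference is only in the final counting step. The paper does not induct on $j$. Instead, it separates off the degree-$1$ generator $x_1$, so that $\dim B_q$ equals the number of non-negative integer solutions of $(2^2-1)x_2+\dots+(2^j-1)x_j\leq q$, i.e.\ the number of lattice points in a $(j-1)$-simplex. It then applies a one-shot lattice-point bound (citing Yau--Zhang) of the form
\[
\dim B_q \leq \frac{1}{(j-1)!}\Bigl(1+\sum_{i=2}^{j}\frac{2^i-1}{q}\Bigr)^{j-1}\prod_{i=2}^{j}\frac{q}{2^i-1}
\leq \frac{(q+(2^{j+1}-2))^{j-1}}{(j-1)!\,2^{\frac{1}{2}j(j-1)}},
\]
and the shift by $2^{j+1}-2$ converts this to the required $q^{j-1}/((j-1)!\,2^{\frac{1}{2}j(j-1)})$.

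Your inductive route also works, and your ``head start'' remark can be made precise: writing $s_j=2^{j+1}-2$ and $d_j=2^j-1$, one has $s_j-s_{j-1}=2^j$, so after stripping $t^{s_j-s_{j-1}}/(1-t^{d_j})$ the sum--versus--integral comparison gives a bound with argument $q-2^j+d_j=q-1<q$, closing the induction. Your argument is more elementary in that it avoids citing an external lattice-point estimate; the paper's argument is a little cleaner in that it handles all $j$ uniformly without an inductive bookkeeping step.
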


\begin{remark} \label{APolyBound} Using the formula of Remark \ref{APoincare} instead, to highlight the dependence on $n$, the corresponding result is $$A(k,n;t) \leq 1 + \sum_{q=1}^\infty ( \sum_{j=1}^k \frac{(q-(n-1)(2^j-1)-1-3j)^{j-1}}{(j-1)! \cdot 2^{\frac{1}{2}j(j-1)}}) \cdot t^q.$$ \end{remark}

\begin{proof} By Lemma \ref{toSteenrod}, it suffices to prove the same bound for $P(M_0/M_k;t)$. As vector spaces, $M_0/M_k \cong M_0/M_1 \oplus \dots \oplus M_{k-1}/M_k$. Thus, by Lemma \ref{WelcherThm}, $$P(M_0/M_k;t) = \sum_{j=1}^k \frac{t^{2^{j+1}-2}}{\prod_{1 \leq i \leq j}(1-t^{2^{i}-1})}.$$

It suffices to prove that the $j$-th term of the sum is at most $\frac{q^{j-1}}{(j-1)! \cdot 2^{\frac{1}{2}j(j-1)}} \cdot t^q$. This $j$-th term is the Poincar\'e series for the commutative polynomial algebra $B = \mathbb{F}(x_1, \dots , x_j)$ (where $\lvert x_i \rvert = 2^{i}-1$) shifted by $2^{j+1}-2$.

Since $x_1$ has degree $1$, the dimension of $B_q$ is equal to the cumulative dimension of the algebra obtained by forgetting $x_1$, i.e. to $\bigoplus_{i=0}^q C_i$, where $C = \mathbb{F}(x_2, \dots , x_k)$.

Now, the dimension of $\bigoplus_{i=0}^q C_i$ is equal to the number of non-negative integer solutions $x_2, \dots , x_j$ to the inequality $$ (2^2-1)x_2 + \dots + (2^j-1) x_j \leq q.$$

This is precisely the number of integer points in the closed simplex defined by this inequality and the coordinate hyperplanes. There is a standard upper bound for this quantity (see for example \cite{YauZhang}, which also gives a stronger result - we do not use the stronger result because it only seems to muddy the formula). This gives
\begin{align*}\dim(B_q)  & \leq \frac{1}{{(j-1)}!}(1+\sum_{i=2}^{j} \frac{2^i-1}{q})^{j-1} \prod_{i=2}^j\frac{q}{2^i-1} \\
& \leq \frac{q^{j-1}}{(j-1)!}(1+\frac{1}{q}\sum_{i=2}^{j} 2^i)^{j-1} \prod_{i=2}^j\frac{1}{2^{i-1}} \\
& \leq \frac{1}{(j-1)! \cdot 2^{\frac{1}{2}j(j-1)}}(q+(2^{j+1}-2))^{j-1}.
\end{align*}

The shift applied in the summation is equivalent to replacing $q$ by $q-(2^{j+1}-2)$, which gives the result. \end{proof}

We are now ready to prove Theorem \ref{mainThm}.

\begin{proof}[Proof of Theorem \ref{mainThm}] Lemma \ref{raisonDetre} and Theorem \ref{secretMain} imply that
\begin{align*}\ell_2(\pi_{q+n}P_{2^{k}}S^{n}) & \leq \ellhat(q) \cdot \sum_{i=1}^q A(k,n;t)_i \\
& \leq \ellhat(q) \cdot \sum_{i=1}^q ( \sum_{j=1}^k \frac{i^{j-1}}{(j-1)! \cdot 2^{\frac{1}{2}j(j-1)}})  \\
& \leq \ellhat(q) \cdot \sum_{j=1}^k \int_{x=0}^{q+1} \frac{x^{j-1}}{(j-1)! \cdot 2^{\frac{1}{2}j(j-1)}} \\
& = \ellhat(q) \cdot \sum_{j=1}^k \frac{(q+1)^j}{j! \cdot 2^{\frac{1}{2}j(j-1)}},
\end{align*}
as required. \end{proof}

\begin{remark} Using the formula of Remark \ref{APolyBound} and the fact that $2^j-1 \geq 1$ for $j \geq 1$, one reintroduces some dependency on $n$ to Theorem \ref{mainThm}, and obtains $$\ell_2(\pi_{q+n}P_{2^{k}}S^{n}) \leq \ellhat(q) \cdot \sum_{j=1}^k \frac{(q+2-n)^j}{j! \cdot 2^{\frac{1}{2}j(j-1)}}.$$ \end{remark}

\printbibliography

\end{document}